\documentclass[12pt]{amsart}
\usepackage{amsmath, amssymb}
\usepackage{xcolor, hyperref, comment}
\newtheorem{theorem}{\sc Theorem}[section]
\newtheorem{lemma}[theorem]{\sc Lemma}
\newtheorem{proposition}[theorem]{\sc Proposition}
\newtheorem{corollary}[theorem]{\sc Corollary}

\usepackage{abstract} % Permite maior personalização

\title[Variations of Baer's theorem]{New variations on the theme of Baer's theorem}

\author[M. Capasso]{Martina Capasso } 
\address{Martina Capasso: Dipartimento di Matematica e Applicazioni, Università di Napoli Federico II, Complesso Universitario Monte S. Angelo, Naples (Italy)}
\email{martina.capasso2@unina.it}

\author[L. Lancellotti]{Liliana Lancellotti} 
\address{Liliana Lancellotti: Dipartimento di Matematica e Applicazioni, Università di Napoli Federico II, Complesso Universitario Monte S. Angelo, Naples (Italy)}
\email{liliana.lancellotti@unina.it}

\author[P. Shumyatsky]{Pavel Shumyatsky} 
\address{Pavel Shumyatsky: Department of Mathematics, University of Brasilia, Brasilia, DF, Brazil}
\email{pavel@unb.br}

\thanks{The first and the second authors are members of GNSAGA (INdAM) and  of AGTA – Advances in Group Theory and Applications (\href{www.advgrouptheory.com}{www.advgrouptheory.com}), and their work was supported by GNSAGA (INdAM). The work of the third author was supported by FAPDF and CNPq. 
The first and second authors wish to thank the University of Brasília for its kind hospitality during the visit in which this work was prepared.}
\keywords{Chernikov group; Rank; Upper and lower central series.}

\subjclass[2020]{20F14}

\begin{document}

\maketitle

\begin{abstract}
Let $\gamma_s(G)$ and $Z_s(G)$ denote the $s$-th terms of the lower and upper central series of a group $G$, respectively. A classical theorem by R. Baer states that if $Z_s(G)$ has finite index $n$ in $G$, then $\gamma_{s+1}(G)$ is also finite. In this paper, we prove that if $G$ is a generalized soluble group such that the quotient $\gamma_s(G)/(\gamma_s(G) \cap Z_t(G))$ has finite rank $r$ for some $s,t$, then the rank of $\gamma_{s+t}(G)$ is finite and $(r,s,t)$-bounded. Moreover, a corresponding result replacing 
the finite-rank assumption by the condition to be a Chernikov group of bounded size is also obtained. These results extend recent generalizations of the classical Baer's theorem.
\end{abstract}

\section{Introduction}
Let $G$ be a group, and let $\gamma_s(G)$ and $Z_s(G)$ denote the $s$-th terms of the lower and upper central series of $G$, respectively.
A classical theorem by R. Baer states that if $Z_s(G)$ has finite index $n$ in $G$, then $\gamma_{s+1}(G)$ is also finite.
Moreover, the order of $\gamma_{s+1}(G)$ can be bounded in terms of $n$ and $s$ (see the proof of 14.5.1 in \cite{ROB}). Throughout the paper, we use the expression “$(a,b,c, \dots)$-bounded" to mean that a quantity is finite and bounded by a certain number depending only on the parameters $a,b,c, \dots$. A stronger form of Baer’s theorem was obtained by G. Fernández-Alcober and M. Morigi, who proved that if $\gamma_s(G)/(\gamma_s(G) \cap Z_t(G))$ has finite order $n$, then $\gamma_{s+t}(G)$ is finite and its order is bounded by a function of $n,s$ and $t$ (see [\cite{AlcMor}, Theorem B], and the last part of \cite{AlcMor} for the quantitative version). \par
Recall that a group $G$ is said to have \emph{finite (Prüfer or special) rank} if there is a positive integer $r$ such that every finitely generated subgroup of $G$ can be generated by at most $r$ elements, and $r$ is the least integer with this property. A group $G$ is called \emph{generalized radical} if it has an ascending series
whose quotients are either locally nilpotent or locally finite. Accordingly, a group $G$ is \emph{locally generalized radical} if every finitely generated
subgroup of $G$ is generalized radical.
A rank analogue of Baer’s result states that if $G$ is a locally generalized radical group and $G/Z_s(G)$ has finite rank $r$, then $\gamma_{s+1}(G)$ has finite and $(r,s)$-bounded rank. This result was obtained by  N. Yu. Makarenko in \cite{Makarenko} when $G$ is finite, and later extended by L. A. Kurdachenko and J. Otal in \cite{KO2013} to the class of locally generalized radical groups. The case $s = 1$ had already been proved in \cite{pavel2013}. 
 \par
Along similar lines, A. R. Jamali and S. Zandi  proved a rank analogue of the aforementioned result of Fernández-Alcober and Morigi \cite{AlcMor}, in the case where $G$ is a finite $p$-group (see [\cite{Jamali}, Corollary 3.7]). Our first main result extends this statement to the more general class of locally generalized radical groups.
\medskip

\begin{theorem}
\label{main1}
Let $G$ be a locally generalized radical group such that $\gamma_s(G)/(\gamma_s(G) \cap Z_t(G))$ has finite rank $r$ for some $s,t$. Then the rank of $\gamma_{s+j}(G)/(\gamma_{s+j}(G) \cap Z_{t-j}(G))$ is finite and $(j,r,s)$-bounded for every $j$ with $0 \leq j \leq t$. In particular, the rank of $\gamma_{s+t}(G)$ is finite and $(r,s,t)$-bounded.
\end{theorem}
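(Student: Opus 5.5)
We will argue by induction on $j$; the case $j=0$ is the hypothesis. It is enough to prove the step $j=1$. That is, we show:

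\emph{If $\gamma_s(G)/(\gamma_s(G)\cap Z_t(G))$ has rank $r$ and $t\ge 1$, then $\gamma_{s+1}(G)/(\gamma_{s+1}(G)\cap Z_{t-1}(G))$ has $(r,s)$-bounded rank.}

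Iterating this $t$ times gives ranks bounded in terms of $(j,r,s)$. Indeed, at step $j$ the index $s$ has become $s+j-1$, which is $(j,s)$-bounded. For $j=t$ we have $Z_0(G)=1$, which yields the final claim.

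To prove the step, put $\bar G=G/Z_{t-1}(G)$. Then $Z(\bar G)=Z_t(G)/Z_{t-1}(G)$, and $\gamma_i(\bar G)$ is the image of $\gamma_i(G)$. The hypothesis says that $\gamma_s(\bar G)/(\gamma_s(\bar G)\cap Z(\bar G))$ is an image of $\gamma_s(G)/(\gamma_s(G)\cap Z_t(G))$, so it has rank at most $r$. The conclusion asks that $\gamma_{s+1}(\bar G)$ have $(r,s)$-bounded rank. Thus we are reduced to the case $t=1$:

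\emph{If $N=\gamma_s(G)$ satisfies $\operatorname{rk}(N/(N\cap Z(G)))\le r$, then $[N,G]=\gamma_{s+1}(G)$ has $(r,s)$-bounded rank.}

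The quotient $\bar G$ is still locally generalized radical, since the class is quotient closed.

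For this core statement we first reduce to finitely generated, and then finite, groups. Rank is a local property, so it suffices to bound the rank of every finitely generated subgroup of $[N,G]$. Such a subgroup lies in $[\gamma_s(H),H]$ for some finitely generated $H\le G$. Since $\gamma_s(H)\cap Z(G)\le Z(H)$, the hypothesis passes to $H$, and $H$ is generalized radical. For finitely generated generalized radical groups with bounded-rank sections, the standard approach of Kurdachenko--Otal \cite{KO2013} is to use residual finiteness-type arguments or the structure of the radical series to reduce to finite groups. For finite groups, rank is controlled by the ranks of Sylow subgroups. We then reduce further to finite $p$-groups, where we can invoke Jamali--Zandi \cite[Corollary 3.7]{Jamali}: it gives exactly the $t=1$ statement with an $(r,s)$-bound. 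For general finite groups we would follow Makarenko's passage from $p$-groups to finite groups. Here $N/(N\cap Z(G))$ has rank $\le r$, so $G/C_G(N/(N\cap Z))$ is controlled. We then bound the rank of $[N,G]$ Sylow-wise, using coprime action and the fact that $[N\cap Z(G),G]=1$, so $[N,G]$ is determined by the action on $N/(N\cap Z(G))$.

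The main obstacle is this passage from $p$-groups, and from finite groups, to locally generalized radical groups. A more intrinsic alternative we would try first is a Baer/Schur-type argument. Set $M=N\cap Z(G)$; the commutator map $N\times G\to[N,G]$ factors through $N/M\times G$. One can then use the result of \cite{KO2013} on $G/Z(G)$ of finite rank applied to a suitable subgroup: take $K=NC$, where $C=C_G(N/M)$, and control $G/C$, which embeds in the automorphism group of a group of rank $r$ and so is of bounded rank modulo locally finite/radical parts. If that fails, we would fall back on the finite reduction. Each finitely generated residually finite piece is handled via finite quotients, using that the rank of a group is bounded by the supremum of the ranks of its finite images for the relevant classes. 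The non-residually-finite locally nilpotent pieces are handled by the torsion-free/periodic decomposition of nilpotent groups of finite rank.
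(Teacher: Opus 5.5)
Your outer reduction is correct and is the paper's. Passing to $G/Z_{t-1}(G)$ reduces everything to the case $t=1$, and iterating that bound with $s$ replaced by $s+j-1$ gives $(j,r,s)$-bounded ranks. Your passage from finitely generated subgroups $H$ back to $G$ is also how the paper finishes.

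The gap is that the case $t=1$, which is the whole content (Proposition~\ref{T3} with Lemma~\ref{L2}), is never proved. You list candidate strategies (``we would try first'', ``we would fall back on'') without carrying any out. In particular, a finitely generated generalized radical group need not be residually finite, so ``reduce to finite groups'' is unjustified as it stands.

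The missing idea is to first apply the rank version of Schur's theorem, Theorem~\ref{T1}, to $K=\gamma_s(G)Z(G)$. Since $Z(G)\le Z(K)$ and $K/Z(G)\cong \gamma_s(G)/(\gamma_s(G)\cap Z(G))$, the rank of $K/Z(K)$ is at most $r$, so $K'=\gamma_s(G)'$ has $r$-bounded rank. Factoring it out makes $\gamma_s(G)$ abelian, hence $G$ abelian-by-nilpotent, and finitely generated such groups are residually finite by P.~Hall's theorem. Even then one does not bound rank by ``finite images'' in general. The paper notes that $\gamma_{s+1}(G)/(\gamma_{s+1}(G)\cap Z(G))$ embeds in $\gamma_s(G)/(\gamma_s(G)\cap Z(G))$, so only the central subgroup $\gamma_{s+1}(G)\cap Z(G)$ needs bounding. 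For finitely generated $E$ in it one has $r(E)=\max_p r(E/E^p)$, and the finite group $E/E^p$ embeds in $\gamma_{s+1}$ of a finite quotient of $G/E^p$.

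Your treatment of finite groups has the same problem. Bounding Sylow subgroups of $\gamma_{s+1}(G)$ does not reduce matters to $p$-groups: for non-nilpotent $G$ such a Sylow subgroup is not $\gamma_{s+1}$ of a $p$-group satisfying the hypothesis. Makarenko's argument is for the different hypothesis that $G/Z_s(G)$ has finite rank.

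What is needed is again to make $\gamma_s(G)$ abelian, now via Theorem~\ref{2.8}, so that $G$ is metanilpotent. Take a Sylow $p$-subgroup $P$ of $\gamma_\infty(G)$ and a Hall $p'$-subgroup $H$. Lemma~\ref{2.4} gives $P=[P,H]$, so coprime action on the abelian group $P$ forces $C_P(H)=1$. Hence $P\cap Z(G)=1$ and $P$ embeds in $\gamma_s(G)/(\gamma_s(G)\cap Z(G))$. Thus $r(\gamma_\infty(G))\le r$, and the nilpotent quotient $G/\gamma_\infty(G)$ is handled by Corollary~\ref{Jamali2}.

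Your alternative via $C=C_G(N/M)$ does not work as stated. The automorphism group of a group of finite rank can have infinite rank (e.g.\ $\mathrm{Aut}(\mathbb{Q})\cong\mathbb{Q}^{\ast}$), and no mechanism is given to get from $G/C$ to $[N,G]$. Likewise, appealing to the structure of nilpotent groups of finite rank is circular, because finiteness of the rank is what has to be proved.
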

\medskip

Obviously, this is an extension of the result [\cite{KO2013}, Theorem A], which can be recovered from Theorem \ref{main1} as a particular case where $s=1$. \par 
A corresponding analogue of Baer’s theorem is also known for Cher\-nikov groups, namely that if $G/Z_s(G)$ is a Chernikov group, then $\gamma_{s+1}(G)$ is still a Chernikov group (see for instance [\cite{Robinson} Part 1, Theorem 4.21 and Corollary 2]). \par
Recall that a group $G$ is \emph{Chernikov} if it contains a radicable abelian normal subgroup $R$ (the finite residual) which is the direct product of a finite number $m(G)$ of groups of Prüfer type, such that the factor group $G/R$ is finite, of order $n(G)$ say. In general a group $G$ is called \emph{radicable} if the equation $x^n=a$ has a solution in $G$ for every positive integer $n$ and every $a \in G$. By a deep result obtained independently by Shunkov \cite{Shunkov} and Kegel and Wehrfritz \cite{Kegel}, Chernikov groups are precisely the locally finite groups satisfying the minimal condition on subgroups, that is, any non-empty set of subgroups possesses a minimal subgroup. Furthermore, if $G$ is a Chernikov group with $m=m(G)$ and $n=n(G)$, the pair $(m,n)$ is usually called the \emph{size} of $G$. Notice also that both the rank of $G$ and the number of primes involved in $G$ are bounded by $m(G) + n(G)$. Of course, a Chernikov group $G$ is finite if and only if $m(G)=0$, and it is a radicable abelian group if and only if $n(G)=1$. 
It is well-known that the class of Chernikov groups is closed with respect to subgroups, homomorphic images and extensions. In particular, if $G$ is a Chernikov group of size $(m,n)$, then each quotient of $G$ is a Chernikov group of size at most $(m,n)$.  \par Finally, we establish the following theorem.
\medskip

\begin{theorem}
\label{main2}
Let $m ,n$ be non-negative integers, with $n \neq 0$.
Let $G$ be a group such that $\gamma_s(G)/(\gamma_s(G) \cap Z_t(G))$ is a Chernikov group of size $(m,n)$ for some $s,t$. Then $\gamma_{s+j}(G)/(\gamma_{s+j}(G) \cap Z_{t-j}(G))$ is a Chernikov group of size bounded in terms of $j,m,n$ and $s$ for every $j$ with $0 \leq j \leq t$. In particular, $\gamma_{s+t}(G)$ is a Chernikov group of size bounded in terms of $m,n,s$ and $t$.
\end{theorem}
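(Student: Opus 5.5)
By induction on $j$, it suffices to handle the step $j=1$: if $\gamma_s(G)/(\gamma_s(G)\cap Z_t(G))$ is Chernikov of size $(m,n)$ with $t\ge 1$, then $\gamma_{s+1}(G)/(\gamma_{s+1}(G)\cap Z_{t-1}(G))$ is Chernikov of size bounded in terms of $m,n,s$. Iterating this $t$ times with $s$ replaced by $s+1,s+2,\dots$ gives the statement, since at each step the new size bound depends only on the previous size and the current index. For $j=t$ we use $Z_0(G)=1$.

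For the step, pass to $\bar G=G/Z_{t-1}(G)$. Since $Z_{t}(G)/Z_{t-1}(G)=Z(\bar G)$ and $\gamma_k(\bar G)=\gamma_k(G)Z_{t-1}(G)/Z_{t-1}(G)$, we have
\[\gamma_k(\bar G)/(\gamma_k(\bar G)\cap Z_{j}(\bar G))\cong \gamma_k(G)/(\gamma_k(G)\cap Z_{t-1+j}(G)).\]
So we reduce to $t=1$: if $A=\gamma_s(G)$ and $A/(A\cap Z(G))$ is Chernikov of size $(m,n)$, then $\gamma_{s+1}(G)=[A,G]$ is Chernikov of bounded size.

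Write $Z=A\cap Z(G)$. Then $A/Z(A)$ is a quotient of $A/Z$, hence Chernikov of size at most $(m,n)$. By the Chernikov form of Schur's theorem, $A'$ is Chernikov of bounded size. We can prove this quantitatively by hand: $A$ is central-by-Chernikov, and the commutator map induces bilinear maps from the radicable part and the finite part, which shows that $A'$ is generated by the images. Modulo $A'$ we may then assume $A$ is abelian. Now $[A,G]$ is generated by the $[a,g]$. Since $\gamma_s(G)$ is generated by left-normed commutators of weight $s$, $[A,G]=\gamma_{s+1}(G)$ is generated by commutators $[x_1,\dots,x_{s+1}]$.

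The key device, as in Fern\'andez-Alcober--Morigi, is the following. For fixed $g\in G$, the map $a\mapsto[a,g]$ is a homomorphism $A\to A$ when $A$ is abelian and normal, and it kills $Z$. So $[A,g]$ is an image of $A/Z$, and is Chernikov of size at most $(m,n)$. The difficulty is that infinitely many $g$ are involved. To control this, use that $G$ acts on the Chernikov group $A/Z$. The kernel $C=C_G(A/Z)$ satisfies $[A,C]\le Z$, so $[A,C,C]=1$ and, by the three-subgroup lemma, the action of $C$ on $A$ is controlled. Moreover, $G/C$ embeds in $\mathrm{Aut}(A/Z)$, which is not finite in general.

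This is where $A=\gamma_s(G)$ must be used, and I expect it to be the main obstacle. The plan is to show that $A/Z$ is generated modulo its finite residual by boundedly many commutators of weight $s$, and to use bilinearity of $(a,g)\mapsto[a,g]$ modulo $[A,G,A]$ to write $[A,G]$ as a product of boundedly many subgroups $[A,x]$, each Chernikov of size at most $(m,n)$. Concretely, for a weight-$s$ commutator $a=[y,x]$, the element $[a,g]$ can be rewritten via Hall--Witt in terms of $[A,y]$-type pieces. For the finite part, the standard Baer/Fernández-Alcober--Morigi argument counts conjugacy classes: each element of $A$ has at most $|A/Z|$-type boundedly many conjugates modulo the divisible part. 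For the radicable part $R/Z$, note that $[R,G]$ is radicable, being a product of homomorphic images of the radicable group $R/Z$ under $a\mapsto[a,g]$ (using bilinearity once more). It also has rank at most $m$ times a bound, since it lies in the Chernikov group $[A,G]$, whose rank we bound using Theorem~\ref{main1}. Theorem~\ref{main1} applies because the relevant subgroups are locally finite-by-abelian, hence locally generalized radical. This gives the bound on $m([A,G])$, while the finite part is bounded by the Fernández-Alcober--Morigi order bound applied to $G/[R,G]$.
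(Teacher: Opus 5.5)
Once you discard the exploratory middle part, your last paragraph is a correct proof and follows the paper's strategy closely. The common ingredients are:
\begin{itemize}
\item the induction through $G/Z_{t-1}(G)$;
\item the reduction to $\gamma_s(G)$ abelian via the Chernikov form of Schur's theorem (the paper cites [De Falco et al., Lemma 4]; your bilinearity sketch is too vague to replace that citation);
\item the homomorphisms $a\mapsto[a,g]$ killing $\gamma_s(G)\cap Z(G)$;
\item Proposition~\ref{T3} for the rank bound;
\item Fern\'andez-Alcober--Morigi for the finite part.
\end{itemize}
The difference is which radicable subgroup you factor out. The paper first shows $\gamma_{s+1}(G)$ is periodic and involves at most $m+n$ primes, and reduces to a $p$-group. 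It then takes the radicable part of $\gamma_{s+1}(G)$ and checks that it contains each $[D,g]$. You instead factor out $[D,G]$ directly (your $R$ is the paper's $D$). This subgroup is radicable by construction, so $D$ becomes central in $G/[D,G]$ and Fern\'andez-Alcober--Morigi applies immediately. You avoid splitting into primary components and get the bound $v(n,s)$ for the finite part instead of $v^{m+n}$.

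Two justifications in that paragraph must be repaired.

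\emph{The Pr\"ufer count.} You cannot bound the number of Pr\"ufer factors of $[D,G]$ because ``it lies in the Chernikov group $[A,G]$''; that is what you are proving. A rank bound alone is also not enough: $\bigoplus_p \mathbb{Z}(p^\infty)$ is radicable of rank $1$ and is not Chernikov. What you need is this: each $[D,g]$ is a homomorphic image of $D/Z$, a direct product of $m$ Pr\"ufer groups. Hence $[D,G]$ is a radicable abelian $\pi$-group with $|\pi|\le m$. Combined with $r([D,G])\le u(m+n,s)$, this gives at most $mu$ Pr\"ufer factors.

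\emph{Applying Proposition~\ref{T3}.} It is not justified by anything being locally finite-by-abelian; that is false in general. A free nilpotent group of class $s\ge 2$ satisfies the hypotheses with $\gamma_s(G)$ abelian and is not finite-by-abelian. The correct reason is that once $\gamma_s(G)$ is abelian, $G$ is abelian-by-nilpotent, hence soluble, hence locally generalized radical.

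Finally, the material on $C_G(A/Z)$, generating $A/Z$ by boundedly many weight-$s$ commutators, Hall--Witt, and counting conjugacy classes is never carried out and is not needed. Drop it.
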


\section{Proofs}

In the present paper we make no attempts to write down explicit bounds for the rank and for the Chernikov size of $\gamma_{s+t}(G)$ in Theorem \ref{main1} and Theorem \ref{main2}, respectively. Furthermore, we use without explicit references the facts that if $r(G) = r$, then every
subgroup and every quotient of $G$ has rank at most $r$ and that if $G$ has a normal subgroup $N$ such that $r(N) = r_1$ and $r(G/N) = r_2$, then
$r(G) \leq r_1 + r_2$. 

The following two results of \cite{pavel2013} are essential to our proof of Theorem \ref{main1}.
\medskip

\begin{theorem}[\cite{pavel2013}, Theorem 1.1]
\label{2.8}
Let $G$ be a finite group such that $G/Z(G)$ has rank $r$. Then the rank of $G'$ is $r$-bounded.
\end{theorem}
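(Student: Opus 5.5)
We can avoid working with Sylow subgroups of $G'$ directly by splitting $G'$ along the central subgroup $G'\cap Z(G)$. Since $G'/(G'\cap Z(G))$ embeds in $G/Z(G)$, its rank is at most $r$. Because ranks are subadditive in extensions, it therefore suffices to show that the abelian group $G'\cap Z(G)$ has $r$-bounded rank.

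Put $Q=G/Z(G)$. The group $G$ is a central extension of $Z(G)$ by $Q$. A standard fact (the Hopf formula applied to a free presentation of $G$) is that $G'\cap Z(G)$ is a homomorphic image of the Schur multiplier $M(Q)=H_2(Q,\mathbb{Z})$. It is therefore enough to bound the rank of $M(Q)$ in terms of $r$.

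Since $M(Q)$ is a finite abelian group, its rank equals the maximum over primes $p$ of the ranks of its Sylow $p$-subgroups $M(Q)_p$. By the classical restriction–corestriction argument, $M(Q)_p$ embeds into $M(Q_p)$, where $Q_p$ is a Sylow $p$-subgroup of $Q$. Moreover $Q_p$ has rank at most $r$. We are thus reduced to the following claim.

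\emph{Claim.} The Schur multiplier of a finite $p$-group of rank at most $r$ has $r$-bounded rank.

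This claim is the main obstacle. I would prove it using the Lubotzky–Mann theory of powerful $p$-groups. A $p$-group $P$ of rank $r$ has a characteristic powerful subgroup $N$ whose index is $r$-bounded; for odd $p$ one can take $N=P^{p^{k}}$ with $k$ depending only on $r$, and for $p=2$ a similar subgroup works. The argument then has two parts.

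\begin{enumerate}
\item For a powerful $p$-group $N$ with $d(N)\le r$, the multiplier is generated by at most $r(r-1)/2+c(r)$ elements. One gets this from the Hopf formula together with the fact that in a powerful group the commutator relations among $r$ generators essentially determine the group. Lubotzky and Mann already proved that $M(N)$ has $r$-bounded rank in this case.
\item The multiplier of $P$ is controlled by those of $N$ and of $P/N$, via the five-term (Lyndon–Hochschild–Serre) exact sequence for $N\trianglelefteq P$. The terms appearing there are $H_2(N)_P$, $H_1(P/N,H_1(N))$ and $H_2(P/N)$. Each has $r$-bounded rank: $P/N$ has $r$-bounded order, and $H_1(N)=N/N'$ has rank at most $r$.
\end{enumerate}

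Combining the two parts bounds the rank of $M(P)$, and hence the rank of $M(Q)$, in terms of $r$ alone. The rank of $G'$ is then at most $r$ plus this bound.
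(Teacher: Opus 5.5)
The paper gives no proof of this statement. It is quoted from \cite{pavel2013} and used as a black box (in Lemma~\ref{L2}), so there is no in-paper argument to compare your route against. Judged on its own, your reduction is correct:
\begin{itemize}
\item $G'/(G'\cap Z(G))$ embeds in $G/Z(G)$.
\item $G'\cap Z(G)$ is the image of $M(Q)\to Z(G)$ in the five-term sequence of the central extension.
\item The transfer $M(Q)\to M(Q_p)$ is injective on the $p$-part, because composing it with the map induced by inclusion gives multiplication by $|Q:Q_p|$.
\end{itemize}
So everything reduces to your Claim. That Claim is a known consequence of Lubotzky--Mann theory, so at that point you are essentially citing rather than proving.

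Your sketch of the Claim, however, rests on a false assertion. The powerful subgroup $N$ does not have $r$-bounded index, and $P/N$ does not have $r$-bounded order. Lubotzky--Mann only give $|P:N|\le p^{f(r)}$, and no bound independent of $p$ is possible. For odd $p$, the extraspecial group of order $p^3$ and exponent $p$ has rank $2$ and is not powerful, so every powerful subgroup of it has index at least $p$.

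Hence your stated reason why $H_1(P/N,H_1(N))$ and $H_2(P/N)$ have $r$-bounded rank fails as written. It can be repaired as follows. Put $Q=P/N$, of order $p^m$ with $m\le f(r)$, and $A=N/N'$.
\begin{itemize}
\item Take a free resolution of $\mathbb{Z}$ over $\mathbb{Z}Q$ whose degree-one term is $\mathbb{Z}Q^{d(Q)}$. It shows that $H_1(Q,A)$ is a subquotient of $A^{d(Q)}$, so its rank is at most $d(Q)\,r\le r^2$.
\item Green's bound $|M(Q)|\le p^{m(m-1)/2}$ gives $\mathrm{rank}\,H_2(Q)\le m(m-1)/2$.
\end{itemize}

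There is also a mislabelling to fix. The three groups you list are not terms of the five-term exact sequence. That sequence is $H_2(P)\to H_2(P/N)\to H_1(N)_P\to H_1(P)\to H_1(P/N)\to 0$, and it gives no control over the kernel of $H_2(P)\to H_2(P/N)$. Your three groups are the total-degree-$2$ terms $E^2_{0,2}$, $E^2_{1,1}$, $E^2_{2,0}$ of the Lyndon--Hochschild--Serre spectral sequence. The fact you actually need is that $H_2(P)$ has a filtration whose factors are subquotients of these three terms.

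Finally, your heuristic for Part~1 (that commutator relations ``essentially determine'' a powerful group) is not an argument. What carries Part~1 is the Lubotzky--Mann bound $d(M(N))\le d(N)(d(N)-1)/2$ for powerful $N$. With these corrections, and granting the cited Lubotzky--Mann results, your argument is a complete proof of the statement.
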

\medskip

\begin{theorem}[\cite{pavel2013}, Theorem 1.2]
\label{T1}
Let $G$ be a locally generalized radical group such that $G/Z(G)$ has finite rank $r$. Then the rank of $G'$ is finite and $r$-bounded.
\end{theorem}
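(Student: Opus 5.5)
The plan is to reduce to finitely generated groups, pass to finite quotients, and then apply Theorem \ref{2.8} there. Because rank is a local invariant, it is enough to show that every finitely generated subgroup $H\le G'$ can be generated by $f(r)$ elements, where $f$ is the function from Theorem \ref{2.8}. Such an $H$ lies in $K'$ for some finitely generated subgroup $K\le G$. Since $K\cap Z(G)\le Z(K)$, the quotient $K/Z(K)$ is an image of $K/(K\cap Z(G))\cong KZ(G)/Z(G)$, and so it has rank at most $r$. Moreover $K$ is generalized radical because $G$ is locally generalized radical. We may therefore assume from now on that $G$ is finitely generated and generalized radical, with $r(G/Z(G))\le r$.

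Next I will establish the structure of $G/Z(G)$. It is a finitely generated generalized radical group of finite rank. Using the standard theory of such groups (radical groups of finite rank are soluble-by-locally finite with Chernikov-type locally finite parts), I expect $G/Z(G)$ to be a finitely generated soluble-by-finite minimax group, and hence residually finite. I then want residual finiteness of $G$ itself, at least in the form: for every finitely generated $H\le G'$ and every finite set of nontrivial elements, there is a normal subgroup $N$ of finite index avoiding them. For this I would use that $G$ is a central extension of a finitely generated soluble-by-finite minimax group. Then $G$ is itself soluble-by-finite and minimax, since $Z(G)$ is abelian and $G$ is finitely generated. This is still a finitely generated soluble-by-finite minimax group, so it is residually finite.

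Now let $N\trianglelefteq G$ with $G/N$ finite. Then $(G/N)/Z(G/N)$ is an image of $G/Z(G)N$, which has rank at most $r$. Theorem \ref{2.8} therefore gives $r\bigl((G/N)'\bigr)\le f(r)$. In particular $HN/N$ is $f(r)$-generated for every finite-index normal $N$, and every finite quotient of $H$ has rank at most $f(r)$.

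The main obstacle is the last step: passing from a bound on the ranks of the finite quotients of $H$ to a bound on $r(H)$. This fails for arbitrary residually finite groups, but I expect it to hold for soluble-by-finite minimax groups. For these, the torsion-free rank and the ranks of the abelian sections can be detected modulo suitable primes $p$, because a torsion-free abelian section of rank $k$ yields elementary abelian quotients of rank $k$ in finite images. The finite (Chernikov) torsion parts are likewise visible in finite quotients. I would first try to prove that a finitely generated soluble-by-finite minimax group all of whose finite quotients have rank at most $f$ has rank bounded by a function of $f$. The argument would go by induction along a series with abelian factors, using residual finiteness and the fact that sections of finite quotients have bounded rank. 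If that lemma resists, a fallback is to argue directly with the Hirsch length of $G'$ and its periodic part, bounding each by counting in $G/G^{p^k}$-type finite quotients.
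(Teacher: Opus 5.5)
The paper does not prove this statement; it is quoted from \cite{pavel2013}, so there is no in-paper argument to compare with. Judged on its own, your proposal has a genuine gap, and it sits exactly where you invoke residual finiteness. Finitely generated soluble-by-finite minimax groups need not be residually finite. Moreover, a group satisfying the hypothesis can have elements of $G'$ that die in every finite quotient.

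Let $A$ be Abels' group of $4\times 4$ upper triangular matrices over $\mathbb{Z}[1/p]$ with diagonal $(1,p^a,p^b,1)$, $a,b\in\mathbb{Z}$. It is finitely generated, soluble and minimax. Its centre is $\{1+xe_{14} : x\in\mathbb{Z}[1/p]\}$, and $Z(A)\le A'$ because $[1+xe_{12},1+ye_{24}]=1+xye_{14}$. Let $C$ be the central subgroup given by $x\in\mathbb{Z}$. Then $G=A/C$ is finitely generated, soluble and of finite rank, so $G/Z(G)$ has finite rank. Yet $G'\cap Z(G)$ contains the Prüfer group $Z(A)/C\cong\mathbb{Z}[1/p]/\mathbb{Z}$, which lies in every subgroup of finite index. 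So for any finitely generated $H\le G'$ meeting it nontrivially, the separation property you require fails. Neither the finite quotients $G/N$ nor Theorem \ref{2.8} give any information about that part of $H$.

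Your reason for $G$ being minimax (``since $Z(G)$ is abelian and $G$ is finitely generated'') is also insufficient. It amounts to knowing that $Z(G)\cap G'$, a homomorphic image of the Schur multiplier of $G/Z(G)$, is small, which is essentially what is to be proved.

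Consequently the last step, which you already flag as the main obstacle, is not merely unproved. Your sketched induction along an abelian series relies on residual finiteness and cannot detect sections lying in the finite residual, so a genuinely different input is needed there.

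You also miss a simplification. Since $G'/(G'\cap Z(G))\cong G'Z(G)/Z(G)$ has rank at most $r$, only the central abelian subgroup $G'\cap Z(G)$ needs bounding. For abelian groups the rank is read off from the finite groups $E/E^p$ with $E$ finitely generated. This is exactly how the paper argues in Proposition \ref{T3}. There, separating $E/E^p$ in a finite quotient of $G/E^p$ is legitimate: after factoring out $\gamma_s(G)'$ by means of Theorem \ref{T1}, a finitely generated $G$ is abelian-by-nilpotent and hence residually finite.

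For $s=1$ that reduction would factor out $G'$ itself, which is circular, and the example above shows the separation can genuinely fail. The part of $G'\cap Z(G)$ that is invisible in finite quotients, such as the Prüfer group above, therefore has to be bounded by a separate argument, and your proposal does not provide one.
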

\medskip

The following results will be useful later on. Recall that if $G$ is a group, then $\gamma_\infty(G)$ denotes the last term of the lower central series of $G$.  
\medskip

\begin{lemma}[\cite{Conciseness}, Lemma 2.4]
\label{2.4}
Let $G$ be a finite metanilpotent group, and let $p$ be a prime. If $P$ is a Sylow $p$-subgroup of $\gamma_\infty(G)$
and $H$ is a Hall $p'$-subgroup of $G$, then $P=[P, H]$.
\end{lemma}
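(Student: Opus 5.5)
The plan is to reduce to the case where $\gamma_\infty(G)$ is a $p$-group, and then to prove two things: that $[P,H]$ is normal in $G$, and that $G/[P,H]$ is nilpotent. Nilpotency of the quotient forces $\gamma_\infty(G)\le [P,H]$, which gives the claim.

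\emph{Reduction.} Since $G$ is metanilpotent, $N=\gamma_\infty(G)$ is nilpotent. Hence $N=P\times Q$ with $Q$ the Hall $p'$-subgroup of $N$, and both $P$ and $Q$ are characteristic in $N$, hence normal in $G$. Put $\bar G=G/Q$. Then $\gamma_\infty(\bar G)=\bar N=\bar P$ is a $p$-group, and $\bar H$ is a Hall $p'$-subgroup of $\bar G$. If we know $\bar P=[\bar P,\bar H]$, then $P\le [P,H]Q$. Intersecting with $P$ and using $[P,H]\le P$ gives $P=[P,H](Q\cap P)=[P,H]$. So from now on I assume $N=P$.

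\emph{$[P,H]$ is normal in $G$.} Since $G/P=G/\gamma_\infty(G)$ is nilpotent, we have $G/P=S/P\times HP/P$, where $S$ is a Sylow $p$-subgroup of $G$. In particular $HP\trianglelefteq G$. In $HP$ the Hall $p'$-subgroups are conjugate by elements of $P$. Therefore
\[
O^p(G)=O^p(HP)=\langle H^x : x\in P\rangle .
\]
By the standard fact that $[A,B]$ is normalized by $\langle A,B\rangle$, the subgroup $[P,H]$ is normalized by $P$ and by $H$. Hence $[P,H^x]=[P,H]^x=[P,H]$ for every $x\in P$. It follows that $[P,H]=[P,O^p(G)]$, which is normal in $G$ because both $P$ and $O^p(G)$ are.

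\emph{$G/[P,H]$ is nilpotent.} Let $M=[P,H]$ and work in $G/M$, keeping the bar notation. There $\bar P$ commutes with $\bar H$, so $\bar H\bar P=\bar P\times\bar H$. Thus $\bar H$ is characteristic in the normal subgroup $\bar H\bar P$, hence $\bar H\trianglelefteq\bar G$. Now $\bar G/\bar H$ is a $p$-group, so it is nilpotent, and $\bar G/\bar P$ is nilpotent. Since $\bar H\cap\bar P=1$, the group $\bar G$ embeds in $\bar G/\bar H\times\bar G/\bar P$, so $\bar G$ is nilpotent. Consequently $\gamma_\infty(\bar G)=1$, i.e. $P\le M=[P,H]\le P$, and so $P=[P,H]$.

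The main obstacle I anticipate is the normality of $[P,H]$ in $G$. The point that makes it work is identifying $O^p(G)$ with the subgroup generated by the $P$-conjugates of $H$, which uses the nilpotency of $G/P$. After that, the nilpotency of the quotient is routine.
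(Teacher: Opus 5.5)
Your proof is correct and complete. There is nothing in the paper to compare it with: the paper does not prove this lemma. It quotes it from the cited reference and uses it only as a black box in the proof of Lemma~\ref{L2}. Your argument therefore stands as a self-contained proof. It runs as follows: reduce modulo $O_{p'}(\gamma_\infty(G))$ so that $\gamma_\infty(G)=P$, show $[P,H]\trianglelefteq G$, and then show $G/[P,H]$ is nilpotent because the image of $H$ becomes normal and meets the image of $P$ trivially.

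Two small remarks.

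First, the sentence ``It follows that $[P,H]=[P,O^p(G)]$'' needs one more line. You already have that $[P,H]$ is normalized by $\langle P,H\rangle=HP$, and $HP\ge O^p(G)$. Combine this with $[P,H^x]=[P,H]$ for $x\in P$ and the identity $[a,yz]=[a,z][a,y]^z$. Induction on word length then gives $[P,\langle H^x : x\in P\rangle]\le[P,H]$. The reverse inclusion is trivial.

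Second, you can get the normality of $[P,H]$ more directly from the Frattini argument. Since $HP\trianglelefteq G$ and the Hall $p'$-subgroups of $HP$ are all conjugate in $HP$, you get $G=HP\,N_G(H)=P\,N_G(H)$. Now $[P,H]$ is normalized by $P$, and also by $N_G(H)$ because $P\trianglelefteq G$. This avoids identifying $O^p(G)$ with the normal closure of $H$ in $HP$. Your route is nonetheless valid.
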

\medskip

Before starting the proofs, we recall a simple fact that will be used several times. If $G$ is a group, $N$ a normal subgroup of $G$ and $\overline{G}=G/N$, then
$\gamma_s(\overline{G}) / (\gamma_s(\overline{G}) \cap Z(\overline{G}))$ is a homomorphic image of the section $\gamma_s(G) / (\gamma_s(G) \cap Z(G))$. Consequently, the property that this section has finite rank $r$ is inherited by quotients. The same holds when such section is a Chernikov group of bounded size. \par
Therefore, as an immediate consequence of [\cite{Jamali}, Proposition 3.6], we obtain the following corollary.
\medskip

\begin{corollary}
\label{Jamali2}
Let $G$ be a finite nilpotent group
and let $s \geq 1$ be an integer such that $\gamma_s(G) / (\gamma_s(G) \cap Z(G))$ has rank $r$. Then $\gamma_{s+1}(G)$ has $(r,s)$-bounded rank.
\end{corollary}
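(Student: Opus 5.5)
The plan is to deduce the corollary directly from [\cite{Jamali}, Proposition 3.6], using the reduction remarked just before the statement. I read that proposition as the finite $p$-group version of the claim: if $P$ is a finite $p$-group and $\gamma_s(P)/(\gamma_s(P)\cap Z(P))$ has rank $r$, then $\gamma_{s+1}(P)$ has $(r,s)$-bounded rank. So the only work is to pass from finite $p$-groups to finite nilpotent groups without letting the bound depend on the number of primes involved.

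Let $G$ be finite nilpotent. Then $G = P_1\times\cdots\times P_k$, the direct product of its Sylow subgroups. The lower and upper central series respect direct products:
\[
\gamma_s(G)=\prod_i \gamma_s(P_i), \qquad Z(G)=\prod_i Z(P_i).
\]
Hence
\[
\gamma_s(G)/(\gamma_s(G)\cap Z(G)) \cong \prod_i \gamma_s(P_i)/(\gamma_s(P_i)\cap Z(P_i)).
\]
Alternatively, each factor $\gamma_s(P_i)/(\gamma_s(P_i)\cap Z(P_i))$ is a homomorphic image of the section for $G$, via the quotient $G\to G/\prod_{j\ne i}P_j\cong P_i$, as in the remark preceding the statement. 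Either way, each factor has rank at most $r$.

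Applying [\cite{Jamali}, Proposition 3.6] to each $P_i$ gives $r(\gamma_{s+1}(P_i))\le f(r,s)$ for a function $f$ independent of the prime. Then $\gamma_{s+1}(G)=\prod_i\gamma_{s+1}(P_i)$ is a finite nilpotent group, and its rank is the maximum of the ranks of its Sylow subgroups. This holds because a finite nilpotent group whose Sylow subgroups are each $d$-generated is itself $d$-generated: take the products of corresponding generators. Applying this to every subgroup, each of which is again a direct product of its Sylow parts, gives
\[
r(\gamma_{s+1}(G))=\max_i r(\gamma_{s+1}(P_i))\le f(r,s).
\]

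The main point to check is that the rank of a direct product of groups of coprime orders is the maximum of the ranks, not their sum. Without this the bound would depend on the number of primes dividing $|G|$. The maximum formula is what makes the bound $(r,s)$-bounded.
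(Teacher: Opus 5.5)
Your proof is correct and is essentially the paper's argument. The paper simply cites [\cite{Jamali}, Proposition 3.6] for finite $p$-groups together with the remark that the hypothesis passes to quotients (here $G\to P_i$), and you have made explicit the remaining step: the rank of the finite nilpotent group $\gamma_{s+1}(G)$ is the maximum of the ranks of its Sylow subgroups, so the bound does not depend on the number of primes dividing $|G|$.
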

\medskip

The next lemma shows that the hypothesis that $G$ is nilpotent in Corollary \ref{Jamali2} is superfluous.
\begin{lemma}
\label{L2}
Let $G$ be a finite group and let $s \geq 1$ be an integer such that $\gamma_s(G) / (\gamma_s(G) \cap Z(G))$ has rank $r$. Then $\gamma_{s+1}(G)$ has $(r,s)$-bounded rank.
\end{lemma}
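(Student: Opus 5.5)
We argue by reducing to the nilpotent case of Corollary \ref{Jamali2} via a Sylow-by-Sylow analysis. Since the rank of a finite group is bounded in terms of the maximum rank of its Sylow subgroups (a classical fact: if every Sylow subgroup has rank at most $k$, then the group has $k$-bounded rank), it is enough to bound, for each prime $p$, the rank of a Sylow $p$-subgroup of $\gamma_{s+1}(G)$ in terms of $r$ and $s$.

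\textbf{Step 1: reduce to $\gamma_{s+1}(G)/\gamma_\infty(G)$.}
Set $N=\gamma_s(G)\cap Z(G)$. Then $\gamma_s(G)/N$ has rank $r$. Consider $\bar G=G/N$, in which $\gamma_s(\bar G)$ has rank $r$. Passing to the nilpotent quotient $G/\gamma_\infty(G)$, the hypothesis is inherited by the remark preceding Corollary \ref{Jamali2}. So Corollary \ref{Jamali2} gives that $\gamma_{s+1}(G)/\gamma_\infty(G)$ has $(r,s)$-bounded rank. It therefore remains to bound the rank of $\gamma_\infty(G)$.

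\textbf{Step 2: bound the rank of $\gamma_\infty(G)$.}
Fix a prime $p$. The plan is to reduce to the case where $G$ is metanilpotent, and then use Lemma \ref{2.4}. Let $P$ be a Sylow $p$-subgroup of $\gamma_\infty(G)$. A Frattini-type argument lets us replace $G$ by a suitable section, for instance $G/\gamma_\infty(G)'$ or quotients by $O_{p'}$, modulo $\Phi(P)$-like terms, so that $\gamma_\infty(G)$ becomes a $p$-group or is nilpotent. Here the rank of $P$ is controlled by the rank of $P/\Phi(P)$ together with the well-known bound on rank via ranks of abelian sections. Ideally one reduces to the case where $\gamma_\infty(G)=P$ is an elementary abelian $p$-group and $G$ is metanilpotent.

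In that case, Lemma \ref{2.4} gives $P=[P,H]$ with $H$ a Hall $p'$-subgroup. We have $P\le \gamma_s(G)$, and $P\cap Z(G)$ is centralized by $H$, so by coprime action $P=[P,H]\times C_P(H)$ and $[P,H]\cap Z(G)=1$. Hence $P=[P,H]$ embeds in $\gamma_s(G)/(\gamma_s(G)\cap Z(G))$, and its rank is at most $r$.

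\textbf{Step 3: assemble.}
Combining $r(\gamma_{s+1}(G))\le r(\gamma_{s+1}/\gamma_\infty)+r(\gamma_\infty)$ with the Sylow-wise bound gives the result.

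\textbf{Main obstacle.}
The hard step is the reduction in Step 2 to the metanilpotent, abelian-$P$ situation. The quotients taken must preserve the hypothesis, which they do since it passes to quotients. They must also not lose rank, and this is the delicate part. The approach I would try first is to take a minimal counterexample-free route: bound the rank of $P$ by the maximal rank of its elementary abelian sections $A=U/V$ with $U,V$ normal in $G$. For each such section, pass to $G/V$, use a solvability-free version via $\gamma_\infty$ of the appropriate metanilpotent quotient, and then apply the coprime argument above.
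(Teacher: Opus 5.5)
Your Step 1 is correct and is exactly how the paper finishes. The coprime-action computation in Step 2 is also the paper's. However, the reduction you flag as the main obstacle is where the real content lies, and your sketch does not achieve it.

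Passing to $G/\gamma_\infty(G)'$ does give a metanilpotent group with abelian $\gamma_\infty$. Your argument then bounds $r(\gamma_\infty(G)/\gamma_\infty(G)')$ by $r$, but nothing in your plan controls $\gamma_\infty(G)'$. No combination of metanilpotent quotients, Lemma \ref{2.4} and coprime action can do this. To see why, take $G$ perfect, e.g.\ a central extension of a nonabelian simple group:
\begin{itemize}
\item then $\gamma_{s+1}(G)=\gamma_s(G)=\gamma_\infty(G)=G$, and the hypothesis says only that $G/Z(G)$ has rank $r$;
\item the only metanilpotent quotient of $G$ is trivial;
\item subgroups of $Z(G)$ map trivially into $\gamma_s(G)/(\gamma_s(G)\cap Z(G))$, so the coprime argument says nothing about them.
\end{itemize}
In this case the lemma coincides with Theorem \ref{2.8} for perfect groups, which amounts to bounding the rank of a quotient of the Schur multiplier of $G/Z(G)$. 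So some such input is unavoidable, and a ``Frattini-type'' or ``elementary abelian sections'' manipulation will not replace it.

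The missing idea is to invoke Theorem \ref{2.8} at the very start. Put $K=\gamma_s(G)Z(G)$. Then $Z(G)\le Z(K)$, and $K/Z(G)\cong\gamma_s(G)/(\gamma_s(G)\cap Z(G))$ has rank $r$. Hence $K/Z(K)$ has rank at most $r$, and $K'=\gamma_s(G)'$ has $r$-bounded rank. Since the hypothesis passes to quotients, you may factor out $\gamma_s(G)'$ and assume $\gamma_s(G)$ is abelian. Then $G$ is metanilpotent and $\gamma_\infty(G)\le\gamma_s(G)$ is abelian, so no further reduction is needed.

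Every Sylow $p$-subgroup $P$ of $\gamma_\infty(G)$ is now abelian and normal in $G$. Lemma \ref{2.4}, together with $P=C_P(H)\times[P,H]$, gives $C_P(H)=1$. Hence $P\cap Z(G)=1$ and $r(P)\le r$. As $\gamma_\infty(G)$ is abelian, $r(\gamma_\infty(G))\le r$ directly, with no need for the general Sylow-rank bound you quote. Your Step 1 then completes the proof. Note also that the direct decomposition $P=[P,H]\times C_P(H)$ requires $P$ abelian, which this reduction supplies automatically.
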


\begin{proof}
Since $\gamma_s(G)Z(G) / Z(G)$ has rank $r$, Theorem \ref{2.8} yields that $(\gamma_s(G)Z(G))' = \gamma_s(G)'$ has $r$-bounded rank. Thus, passing to the quotient $G/\gamma_s(G)'$, we may assume that $\gamma_s(G)$ is abelian. In particular, $G$ is metanilpotent. \par
Let $P$ be a Sylow $p$-subgroup of $\gamma_\infty(G)$, where $p$ is a prime, and let $H$ be a Hall $p'$-subgroup of $G$. By Lemma \ref{2.4}, we have $P=[P,H]$. Moreover, since $P$ is normal in $G$, it follows that $P =  C_P(H) \times [P,H]$ (see [\cite{Gorenstein}, Theorem 2.3]), hence $C_P(H)$ is trivial. On the other hand, $P \cap Z(G) \leq C_P(H)$, and therefore $P$ embeds into $\gamma_s(G) / (\gamma_s(G) \cap Z(G))$. Consequently, $P$ has rank at most $r$. Since this holds for every Sylow subgroup of $\gamma_\infty(G)$, we deduce that $\gamma_\infty(G)$ itself has rank at most $r$.  \par
Finally, applying Corollary \ref{Jamali2} to the nilpotent quotient $G/\gamma_\infty(G)$, we conclude that $\gamma_{s+1}(G)/\gamma_\infty(G)$ has $(r,s)$-bounded rank. Hence, $\gamma_{s+1}(G)$ has $(r,s)$-bounded rank, as required. 
\end{proof}
\medskip

The key step in the proof of Theorem \ref{main1} is in the following proposition. It is a rank analogue of [\cite{AlcMor}, Proposition 2.3]. Note that quotients of a generalized radical group are generalized radical as well. Consequently, the class of locally generalized radical group is also closed under taking quotient. 
\medskip

\begin{proposition}
\label{T3}
Let $G$ be a locally generalized radical group
and let $s \geq 1$ be an integer such that $\gamma_s(G) / (\gamma_s(G) \cap Z(G))$ has finite rank $r$. Then the rank of $\gamma_{s+1}(G)$ is finite and $(r,s)$-bounded.
\end{proposition}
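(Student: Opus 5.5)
Our plan is to reduce Proposition~\ref{T3} to the finite case, Lemma~\ref{L2}, by a local argument, using the standard fact that rank is a local property. A group has rank at most $k$ as soon as every finitely generated subgroup of it can be generated by $k$ elements. So it is enough to produce an $(r,s)$-bounded number $f(r,s)$ such that every finitely generated subgroup of $\gamma_{s+1}(G)$ has rank at most $f(r,s)$.

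As a first step we kill $\gamma_s(G)'$, exactly as in the proof of Lemma~\ref{L2}. Set $A=\gamma_s(G)Z(G)$. Then $A/Z(A)$ is a quotient of $A/Z(G)\cong \gamma_s(G)/(\gamma_s(G)\cap Z(G))$, so it has rank at most $r$. Since $A$ is locally generalized radical, Theorem~\ref{T1} gives that $A'=\gamma_s(G)'$ has $r$-bounded rank. Passing to $G/\gamma_s(G)'$, which inherits both the hypothesis and local generalized radicality, we may assume that $\gamma_s(G)$ is abelian. Then $G$ is soluble-by-nilpotent, indeed metanilpotent of class at most $s$ over an abelian normal subgroup.

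Next we pass to finite sections. Take a finitely generated subgroup $K\le\gamma_{s+1}(G)$. It lies in a subgroup $H=\langle X\rangle$ with $X$ a finite set, and $K\le\gamma_{s+1}(H)$ after enlarging $X$ to include the finitely many elements of $G$ whose iterated commutators produce the generators of $K$. The trouble is that $Z(G)\cap H$ need not equal $Z(H)$ in the right way. However, $\gamma_s(H)Z(H)/Z(H)$ is a quotient of $\gamma_s(H)(Z(G)\cap H)/(Z(G)\cap H)$, which embeds in $\gamma_s(G)/(\gamma_s(G)\cap Z(G))$, so the hypothesis passes to $H$ with the same $r$. Now $H$ is finitely generated and metanilpotent, hence polycyclic-by-nilpotent; in fact it is finitely generated soluble, and $\gamma_{s+1}(H)$ is finitely generated. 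Finitely generated soluble groups are residually finite only with extra hypotheses, so we would use instead that $H$ is nilpotent-by-abelian-type metanilpotent and finitely generated, which by P.~Hall's theorem is residually finite. For each normal subgroup $N$ of finite index in $H$, the finite group $H/N$ satisfies the hypothesis with rank $r$, since quotients inherit it. Lemma~\ref{L2} then bounds the rank of $\gamma_{s+1}(H)N/N$ by $f(r,s)$.

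The main obstacle is transferring the bound back from finite quotients to $\gamma_{s+1}(H)$ itself. Our first attempt will use the fact that a finitely generated metanilpotent group is residually finite. Then $K$ embeds in the inverse limit of its finite images, and for a finitely generated subgroup $L\le K$, with generators $g_1,\dots,g_m$, we choose $N$ avoiding enough elements to detect a presentation-independent statement. The cleaner route is through the bound $d(L)\le f$ for residually finite $L$ all of whose finite quotients have rank at most $f$. This is not automatic in general, so we instead exploit that $\gamma_{s+1}(H)$ is a finitely generated soluble residually finite group whose finite quotients have bounded rank. By results of Lubotzky--Mann and Segal, such a group has finite rank bounded in terms of the rank of its finite quotients for soluble groups; we expect this to be the most delicate point. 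Failing that, we would argue directly. The abelian normal subgroup $\gamma_s(H)$ is a finitely generated module over a polycyclic group, and its torsion-free rank and the $p$-ranks of its finite quotients are controlled by those of the finite images of $H$. Combining this with the bounded rank of the nilpotent quotient $H/\gamma_s(H)$ would give a bound on the rank of $\gamma_{s+1}(H)$ in terms of $r$ and $s$ only.
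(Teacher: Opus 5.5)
Your preliminary reductions are sound and match the paper: killing $\gamma_s(G)'$ via Theorem~\ref{T1}, the local argument, the observation that the hypothesis passes to finitely generated subgroups $H$ (because $Z(G)\cap H\le Z(H)$), residual finiteness of $H$, and Lemma~\ref{L2} applied to the finite quotients $H/N$. For residual finiteness, the relevant form of Hall's theorem is for finitely generated \emph{abelian}-by-nilpotent groups, which is what $H$ is once $\gamma_s(H)$ is abelian.

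The gap is exactly the step you flag as the main obstacle: you never transfer the bound on $r(\gamma_{s+1}(H)N/N)$ back to $\gamma_{s+1}(H)$. Your claim that $\gamma_{s+1}(H)$ is finitely generated is false. Take $H=\langle a,t\mid a^t=a^2\rangle\cong\mathbb{Z}[1/2]\rtimes\mathbb{Z}$ and $s=2$. Then $Z(H)=1$ and $\gamma_2(H)=\gamma_3(H)\cong\mathbb{Z}[1/2]$, so $r=1$, yet $\gamma_3(H)$ is not finitely generated. So the appeal to Lubotzky--Mann/Segal does not apply as stated. Even applied to a finitely generated subgroup of $\gamma_{s+1}(H)$, such results would need control of all finite quotients of that subgroup, not just its images in finite quotients of $H$. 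You would also still need a bound depending only on the upper rank. The ``direct'' module-theoretic fallback is only a sketch.

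The missing idea is to split off the centre. Since $\gamma_{s+1}(H)\le\gamma_s(H)$, the quotient $\gamma_{s+1}(H)/(\gamma_{s+1}(H)\cap Z(H))$ embeds in $\gamma_s(H)/(\gamma_s(H)\cap Z(H))$. So it has rank at most $r$, and no transfer is needed for this part. It remains to bound the rank of the abelian group $C=\gamma_{s+1}(H)\cap Z(H)$. For a finitely generated $E\le C$ one has $r(E)=\max_p\dim_{\mathbb{F}_p}E/E^p$, so it suffices to bound each $\dim_{\mathbb{F}_p}E/E^p$:
\begin{itemize}
\item Because $E$ is central, $E^p$ is normal in $H$.
\item $H/E^p$ is again finitely generated abelian-by-nilpotent, hence residually finite. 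So there is a normal subgroup $N\ge E^p$ of finite index with $E\cap N=E^p$.
\item Then $E/E^p$ embeds in $\gamma_{s+1}(H/N)$, whose rank is $(r,s)$-bounded by Lemma~\ref{L2}.
\end{itemize}
Thus only finite elementary abelian sections ever have to be separated by finite quotients, and centrality is what makes them normal. This replaces the unproved ``upper rank implies rank'' step in your plan.
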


\begin{proof}
Since the group $\gamma_s(G)Z(G) / Z(G)$ has rank $r$, Theorem \ref{T1} implies that $(\gamma_s(G)Z(G))' = \gamma_s(G)'$ has $r$-bounded rank. Therefore passing to the quotient $G/\gamma_s(G)'$ we may assume that $\gamma_s(G)$ is abelian. \par Suppose first that $G$ is finitely generated. In this case $G$ is residually finite (see [\cite{Robinson} Part 2, Theorem 9.51]). Observe that 
$$\gamma_{s+1}(G)/(\gamma_{s+1}(G) \cap Z(G))$$ naturally embeds into $\gamma_s(G) / (\gamma_s(G) \cap Z(G))$. Therefore the quotient $\gamma_{s+1}(G) / (\gamma_{s+1}(G) \cap Z(G))$ has finite rank at most $r$. So it is sufficient to show that $\gamma_{s+1}(G) \cap Z(G)$ has  finite $(r,s)$-bounded rank. 
Let $E$ be a finitely generated subgroup of $\gamma_{s+1}(G) \cap Z(G)$. Since $E$ is abelian, 
\[ r(E) = \underset{p \in \mathbb{P}}{\mbox{max}} \ r_p(E).
\]
Fix a prime number $p$. Note that $E^p$ is normal (even central) in $G$ and put $\overline{G} = G/E^p$. We have that $\overline{E} = E/E^p$ is a finite $p$-elementary abelian subgroup of the residually finite group $\overline{G}$. Therefore
there exists a normal subgroup $\overline{N}$ of $\overline{G}$ such that $\overline{G}/\overline{N}$ is finite and $\overline{E} \cap \overline{N} = \{1\}$. It follows that 
$\overline{E}$ is isomorphic to a subgroup of $\gamma_{s+1}(\overline{G}/\overline{N})$.

Applying Lemma \ref{L2} to the finite group $\overline{G}/\overline{N}$, we obtain that the
rank of $\gamma_{s+1}(\overline{G}/\overline{N})$ is finite and $(r,s)$-bounded, and thus also $E/E^p$
has  finite $(r,s)$-bounded rank. Since this holds for every prime $p$, it follows that $\gamma_{s+1}(G) \cap Z(G)$ has  finite $(r,s)$-bounded rank and this proves the claim when $G$ is finitely generated. \par In other words, there exists an $(r,s)$-bounded number, say $R_0$, such that $r(\gamma_{s+1}(K)) \leq R_0$ whenever a group $K$ satisfies the
hypothesis of the theorem and is finitely generated. Suppose now that
our group $G$ is not necessarily finitely generated. If $r(\gamma_{s+1}(G)) \geq R_0 +1$, we can choose $y_1,\dots , y_{R_0+1} \in \gamma_{s+1}(G)$ such that the subgroup $\langle y_1,\dots , y_{R_0+1} \rangle$
cannot be generated by $R_0$ elements. We can also choose a finitely generated subgroup $K$ in $G$ such that $y_1,\dots , y_{R_0+1} \in \gamma_{s+1}(K)$. This yields a contradiction since we know that $r(\gamma_{s+1}(K)) \leq R_0$. The proof is now complete.
\end{proof}
\medskip

We are now ready to prove Theorem \ref{main1}.
\medskip

\begin{proof}[Proof of Theorem \ref{main1}]
We proceed by induction on $t$. Clearly, the statement holds if $t=0$. Assume that $t > 0$, and consider the group $\overline{G} = G/Z_{t-1}(G)$. 
Since $\overline{G}$ is still a locally generalized radical group, applying Proposition \ref{T3} to $\overline{G}$, it follows that 
$\gamma_{s+1}(\overline{G}) $ has finite and $(r,s)$-bounded rank. Now, by inductive hypothesis, we obtain that $\gamma_{s+j}(G)/(\gamma_{s+j}(G) \cap Z_{t-j}(G))$ has $(j,r,s)$-bounded rank, for every $j$ with $0 \leq j \leq t$.
\end{proof}
\medskip

The following proposition provides an analogue of Proposition \ref{T3} for the class of Chernikov groups of bounded size.
\medskip

\begin{proposition}
\label{main}
Let $m,n,s$ be non-negative integers, with $n,s \neq 0$. Let $G$ be a group such that $\gamma_s(G)/(\gamma_s(G) \cap Z(G))$ is a Chernikov group of size $(m,n)$. Then $\gamma_{s+1}(G)$ is a Chernikov group of size bounded in terms of $m,n$ and $s$.
\end{proposition}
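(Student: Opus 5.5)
We would mirror the proof of Proposition \ref{T3}, but work with Chernikov groups directly and use the known Chernikov analogue of Baer's theorem in place of Theorem \ref{T1}.

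\emph{Step 1: reduction to $\gamma_s(G)$ abelian.} Put $A=\gamma_s(G)Z(G)$. Then $A/Z(A)$ is a quotient of $A/Z(G)\cong\gamma_s(G)/(\gamma_s(G)\cap Z(G))$, so it is Chernikov of size at most $(m,n)$. By the Chernikov version of Baer's theorem with $s=1$, $A'=\gamma_s(G)'$ is Chernikov. To see that its size is bounded, we use the standard argument: $A/Z(A)$ has a radicable part $R/Z(A)$ of index at most $n$ and rank at most $m$. Then $[R,R]$ is controlled by Theorem \ref{T1}: it has rank bounded in $m,n$, and its finite part is controlled through the bounded-order Baer/Schur bound applied to $A/R$. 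This gives a $(m,n)$-bounded size for $\gamma_s(G)'$. Since extensions of Chernikov groups have additive size control, we may pass to $G/\gamma_s(G)'$ and assume $\gamma_s(G)$ is abelian.

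\emph{Step 2: reduction to the centre.} As in Proposition \ref{T3}, $\gamma_{s+1}(G)/(\gamma_{s+1}(G)\cap Z(G))$ embeds in $\gamma_s(G)/(\gamma_s(G)\cap Z(G))$, so it is Chernikov of size at most $(m,n)$. It therefore suffices to show that $C=\gamma_{s+1}(G)\cap Z(G)$ is Chernikov of bounded size.

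\emph{Step 3: $C$ is Chernikov of bounded size.} Here is the main difference from the rank case: we must show $C$ is periodic with minimal condition, not merely of bounded rank. Write $Q=\gamma_s(G)/(\gamma_s(G)\cap Z(G))$. Since $\gamma_s(G)$ is abelian, the map $(x,g)\mapsto [x,g]$ for $x\in\gamma_s(G)$, $g\in G$ is multiplicative in $x$ and factors through $Q$ in $x$. Also $\gamma_{s+1}(G)$ is generated by such commutators.

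First consider the finite case. If $Q$ is finite of order $k\le n$, the Fernández-Alcober–Morigi theorem gives $|\gamma_{s+1}(G)|$ bounded in terms of $n$ and $s$.

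In general, let $D$ be the preimage in $\gamma_s(G)$ of the radicable part of $Q$. Then $D$ is normal in $G$ and $[D,G]$ is a subgroup of $\gamma_{s+1}(G)$. For fixed $g$, the map $x\mapsto [x,g]$ is a homomorphism from $D$, trivial on $\gamma_s(G)\cap Z(G)$. So $[D,g]$ is a homomorphic image of a Prüfer group of rank at most $m$, and is therefore divisible of rank at most $m$.

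To control $[D,G]$ as $g$ varies, we use the rank bound. By Proposition \ref{T3}, $\gamma_{s+1}(G)$ has rank bounded by $m+n$ (the rank of $Q$) and $s$. Moreover $\gamma_{s+1}(G)$ is periodic, since it is generated by the periodic groups $[\gamma_s(G),g]$ inside the abelian-by-(periodic) structure. Its exponent on the non-divisible part is bounded using the index $n$ of $D$. Then $\gamma_{s+1}(G)$ is a periodic abelian-by-Chernikov group of bounded rank, involving boundedly many primes. The primes occurring in $[D,g]$ and in $Q$ are at most $m+n$ many, and this makes it Chernikov of bounded size.

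\emph{Main obstacle.} The hardest point is the one in Step 3: bounding the finite part $n(\cdot)$ and the number of primes, as opposed to bounding the rank. I would handle it by working modulo the divisible subgroup $[D,G]$ and applying the bounded finite Fernández-Alcober–Morigi bound to the quotient, where the section has order at most $n$.
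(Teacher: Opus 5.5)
The argument that actually works is the one in your final paragraph. Read that way, your proof is correct and is a streamlined variant of the paper's proof rather than a transplant of the rank argument.

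\textbf{Comparison with the paper.} After making $\gamma_s(G)$ abelian, the paper first assumes $\gamma_{s+1}(G)$ is a $p$-group. It takes $R$ to be the radicable part of $\gamma_{s+1}(G)$, which is a product of at most $u$ Pr\"ufer groups by Proposition~\ref{T3}. It shows $[D,g]\le R$ for all $g$ and applies Fern\'andez-Alcober--Morigi to $G/R$. The general case is then handled prime by prime via $G/O_{p'}(\gamma_{s+1}(G))$, ending with size $(u(m+n),v^{m+n})$.

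You instead kill $[D,G]$ directly:
\begin{itemize}
\item it is normal in $G$;
\item it is divisible, being generated by the divisible subgroups $[D,g]$ of the abelian group $\gamma_{s+1}(G)$;
\item its rank is at most $u$ by Proposition~\ref{T3};
\item it involves only the at most $m$ primes of the radicable part of $Q$, so it is a direct product of at most $mu$ Pr\"ufer groups.
\end{itemize}
Modulo $[D,G]$ the image of $D$ is central, so the section has order at most $n$, and Fern\'andez-Alcober--Morigi bounds $|\gamma_{s+1}(G):[D,G]|$ by $v(n,s)$. This avoids the $p$-local reduction and the gluing, and gives the better size $(mu,v)$. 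You should state explicitly that Proposition~\ref{T3} applies because after Step 1 the group $G$ is abelian-by-nilpotent, hence soluble.

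\textbf{Intermediate claims to remove or repair.}

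\emph{Step 2.} Embedding $\gamma_{s+1}(G)/(\gamma_{s+1}(G)\cap Z(G))$ into $Q$ does not bound its Chernikov size. Size passes to quotients but not to subgroups: the subgroup $C_{p^k}$ of $C_{p^\infty}$ has size $(0,p^k)$. This is exactly why the rank proof cannot be copied. The reduction to $C$ is unnecessary anyway.

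\emph{End of Step 3.} A periodic abelian group of bounded rank involving boundedly many primes need not have bounded size, so the sentence concluding Step 3 is false. The finite part has to come from Fern\'andez-Alcober--Morigi applied to $G/[D,G]$.

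\emph{Step 1.} The justification is misdirected. Here $[R,R]$ is trivial: $R/Z(R)$ is periodic divisible abelian, which forces $R$ to be abelian. Schur applied to $A/R$ yields nothing. The correct argument uses the same device as your Step 3:
\begin{itemize}
\item $[R,A]=\prod_i[R,a_i]$, over at most $n$ coset representatives $a_i$ of $R$ in $A$;
\item each $[R,a_i]$ is an image of $R/Z(A)$ under $r\mapsto[r,a_i]$;
\item hence $[R,A]$ is divisible with at most $nm$ Pr\"ufer factors;
\item Schur's theorem then applies to $A/[R,A]$, in which $R/[R,A]$ is central of index at most $n$.
\end{itemize}
Alternatively, cite the lemma of De Falco et al.\ as the paper does.
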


\begin{proof}
Since $\gamma_s(G)Z(G) / Z(G)$ is a Chernikov group of size $(m,n)$, it follows from [\cite{DeFalco}, Lemma 4] that $(\gamma_s(G)Z(G))' = \gamma_s(G)'$ is a Chernikov group of size  bounded in terms of $m$ and $n$. Thus, we may replace $G$ by $G/\gamma_s(G)'$ and assume that $\gamma_s(G)$ is abelian. 
\par 
Let $a \in \gamma_s(G)$. There exists a positive integer $k$ such that $[a^k, g] = 1$ for every $g \in G$. As $\gamma_s(G)$ is abelian, we have $[a,g]^k = [a^k, g] = 1$ and therefore $\gamma_{s+1}(G)$ is periodic. \par
Assume first that $\gamma_{s+1}(G)$ is a $p$-group, for some prime $p$. The group $G$ obviously satisfies the assumptions of Proposition \ref{T3} with $r = m + n$, hence it follows that the rank of $\gamma_{s+1}(G)$ is finite and bounded by a function $u=u(m,n,s)$. It follows that the radicable part $R$ of $\gamma_{s+1}(G)$ is a direct product of at most $u$ Prüfer subgroups. In particular, $R$ is also normal in $G$. Let $D/(\gamma_s(G) \cap Z(G))$ be the radicable part of $\gamma_s(G)/(\gamma_s(G) \cap Z(G))$; then $\gamma_s(G)/D$ is finite of order $n$. For convenience, put $Z/R=\gamma_s(G/R) \cap Z(G/R)$ and let $g \in G$. Since $\gamma_s(G)$ is abelian, the map
\[ x(\gamma_s(G) \cap Z(G)) \in D/(\gamma_s(G) \cap Z(G)) \to [x,g] \in [D,g]\] induces an epimorphism from $D/(\gamma_s(G) \cap Z(G))$ to $[D,g]$. Thus $[D,g]$ is a radicable abelian subgroup of $\gamma_{s+1}(G)$, consequently $[D,g] \leq R$. Therefore $D \leq Z$ and so 
\[\frac{\gamma_s(G/R)}{\gamma_s(G/R) \cap Z(G/R) } \simeq \gamma_s(G)/Z
\] is finite of order at most $n$.
Applying [\cite{AlcMor}, 
Proposition 2.3] to the quotient $G/R$, we conclude that
$\gamma_{s+1}(G)/R$ has finite order bounded by a function $v=v(n,s)$, and so $\gamma_{s+1}(G)$ is a Chernikov group of size at most $(u, v)$. \par
Suppose now that $\gamma_{s+1}(G)$ is not necessarily a $p$-group. Since the quotient $\gamma_s(G)/(\gamma_s(G) \cap Z(G))$ is a Chernikov group of size $(m,n)$, the set of primes involved in this quotient has cardinality at most $m+n$.
%$ |\pi(\gamma_s(G)/\gamma_s(G) \cap Z(G))| \leq m+n$ 
Furthermore, since $\gamma_s(G)$ is abelian, 
for every $g \in G$, the subgroup $[\gamma_s(G),g]$ is a homomorphic image of $\gamma_s(G)/(\gamma_s(G) \cap Z(G))$ through the map
\[
a \in \gamma_s(G) \to [a,g] \in [\gamma_s(G),g].
\]
Hence every prime occurring in the torsion of $[\gamma_s(G),g]$ also occurs in the quotient $\gamma_s(G)/(\gamma_s(G) \cap Z(G))$. Moreover, since $\gamma_{s+1}(G)$ is abelian, it is generated by the subgroups of the form $[\gamma_s(G),g]$. Hence, also the primes involved in $\gamma_{s+1}(G)$ are contained in the set of the primes involved in $\gamma_s(G)/(\gamma_s(G) \cap Z(G))$ and their number is at most $m+n$. For each prime $p$, let $O_{p'}(\gamma_{s+1}(G))$ be the subgroup generated by all Sylow $q$-subgroups of $\gamma_{s+1}(G)$, with $q \neq p$. Therefore we apply the previous case to the quotient $G/O_{p'}(\gamma_{s+1}(G))$ and thus we obtain that each Sylow $p$-subgroup of $\gamma_{s+1}(G)$ is a Chernikov group of size at most $(u, v)$, and so $\gamma_{s+1}(G)$ itself is a Chernikov group of size at most $(u(m+n), v^{m+n})$.
\end{proof}
\medskip

The proof of Theorem \ref{main2} follows the same lines as that of Theorem \ref{main1}, with Proposition \ref{main} taking the place of Proposition \ref{T3}, and is therefore omitted.

\end{document}